\documentclass[10pt]{amsart}
\usepackage{amssymb}
\usepackage{amsthm,amsmath}

\title{On indefinite sums weighted by periodic sequences}

\author{Jean-Luc Marichal}\thanks{Jean-Luc Marichal is with the Mathematics Research Unit, University of Luxembourg, Maison du Nombre, 6, avenue de la Fonte, L-4364 Esch-sur-Alzette, Luxembourg.\\ Email: jean-luc.marichal[at]uni.lu}
\address{Mathematics Research Unit, University of Luxembourg, Maison du Nombre, 6, avenue de la Fonte, L-4364 Esch-sur-Alzette, Luxembourg}
\email{jean-luc.marichal[at]uni.lu}

\date{October 15, 2018}

\theoremstyle{plain}
\newtheorem{theorem}{Theorem}

\newtheorem{proposition}[theorem]{Proposition}

\newtheorem{fact}[theorem]{Fact}

\theoremstyle{definition}

\newtheorem{example}[theorem]{Example}

\theoremstyle{remark}

\newtheorem{remark}{Remark}

\newcommand{\R}{\mathbb{R}}
\newcommand{\C}{\mathbb{C}}
\newcommand{\N}{\mathbb{N}}
\newcommand{\Z}{\mathbb{Z}}

\begin{document}
\begin{abstract}
For any integer $q\geq 2$ we provide a formula to express indefinite sums of a sequence $(f(n))_{n\geq 0}$ weighted by $q$-periodic sequences in terms of indefinite sums of sequences $(f(qn+p))_{n\geq 0}$, where $p\in\{0,\ldots,q-1\}$. When explicit expressions for the latter sums are available, this formula immediately provides explicit expressions for the former sums. We also illustrate this formula through some examples.
\end{abstract}

\keywords{Indefinite sum, anti-difference, periodic sequence, generating function, harmonic number.}

\subjclass[2010]{Primary 05A19, 39A70; Secondary 05A15.}

\maketitle

\section{Introduction}

Let $\N=\{0,1,2,\ldots\}$ be the set of non-negative integers. We assume throughout that $q\geq 2$ is a fixed integer and we set $\omega=\exp(\frac{2\pi i}{q})$. Consider two functions $f\colon\N\to\C$ and $g\colon\Z\to\C$ and suppose that $g$ is $q$-periodic, that is, $g(n+q)=g(n)$ for every $n\in\Z$. Also, consider the functions $S\colon\N\to\C$ and $T_p\colon\N\to\C$ ($p=0,\ldots,q-1$) defined by
$$
S(n) ~=~ \sum_{k=0}^{n-1}g(k) f(k)
$$
and
$$
T_p(n) ~=~ \sum_{k=0}^{n-1}f(q k+p),
$$
respectively. These functions are indefinite sums (or anti-differences) in the sense that the identities
$$
\Delta_n S(n) ~=~ g(n)f(n)\qquad\text{and}\qquad \Delta_n T_p(n) ~=~ f(q n+p)
$$
hold on $\N$, where $\Delta_n$ is the classical difference operation defined by $\Delta_n f(n)=f(n+1)-f(n)$; see, e.g., \cite[{\S}2.6]{GraKnuPat94}.

In this paper we provide a conversion formula that expresses the sum $S(n)$ in terms of the sums $T_p(n)$ for $p=0,\ldots,q-1$ (see Proposition~\ref{prop:main}). Such a formula can sometimes be very helpful for it enables us to find an explicit expression for $S(n)$ whenever explicit expressions for the sums $T_p(n)$ for $p=0,\ldots,q-1$ are available.

\begin{example}\label{ex:1}
The sum
$$
S(n) ~=~ \sum_{k=1}^{n-1}\cos\left(\frac{2k\pi}{3}\right)\log k{\,},\qquad n\in\N\setminus\{0\},
$$
where the function $g(k)=\cos(\frac{2k\pi}{3})$ is $3$-periodic, can be computed from the sums
$$
T_p(n) ~=~ \sum_{k=1}^{n-1}\log(3k+p) ~=~ \log\left(3^{n-1}\,\frac{\Gamma(n+p/3)}{\Gamma(1+p/3)}\right),\qquad p\in\{0,1,2\}.
$$
Using our conversion formula we arrive, after some algebra, at the following closed-form representation
$$
S(n) ~=~ \frac{1}{4}\,\log\left(\frac{4\pi^2}{27}\right)+\sum_{j=0}^2\cos\left(\frac{2\pi (n+j)}{3}\right)\,\log\left(3^{j/3}\,\Gamma\left(\frac{n+j}{3}\right)\right).
$$
\end{example}

Oppositely, we also provide a very simple conversion formula that expresses each of the sums $T_p(n)$ for $p=0,\ldots,q-1$ in terms of sums of type $S(n)$ (see Proposition~\ref{prop:main}). As we will see through a couple of examples, this formula can sometimes enable us to find closed-form representations of sums $T_p(n)$ that are seemingly hard to evaluate explicitly.

\begin{example}
Sums of the form
$$
\sum_{k\geq 0}{m\choose q k+p}\frac{1}{q k+p+1}
$$
where $m\in\N$ and $p\in\{0,\ldots,q-1\}$, may seem complex to be evaluated explicitly. In Example~\ref{ex:bin} we provide a closed-form representation of this sum by first considering a sum of the form
$$
\sum_{k\geq 0}g(k){m\choose k}\frac{1}{k+1},
$$
for some $q$-periodic function $g(k)$. For instance, we obtain
$$
\sum_{k\geq 0}{m\choose 3 k+1}\frac{1}{3k+2} ~=~ \frac{1}{6(m+1)}{\,}\left(2^{m+2}-3\cos\frac{m\pi}{3}-\cos\frac{5m\pi}{3}\right).
$$
\end{example}

In this paper we also provide an explicit expression for the (ordinary) generating function of the sequence $(S(n))_{n\geq 0}$ in terms of the generating function of the sequence $(f(n))_{n\geq 0}$ (see Proposition~\ref{prop:gf}). Finally, in Section 3 we illustrate our results through some examples.

\section{The result}

The functions $g_p\colon\Z\to\{0,1\}$ ($p=0,\ldots,q-1$) defined by
$$
g_p(n) ~=~ g_0(n-p) ~=~
\begin{cases}
1, & \text{if $n=p ~(\mathrm{mod}~q)$},\\
0, & \text{otherwise},
\end{cases}
$$
form a basis of the linear space of $q$-periodic functions $g\colon\Z\to\C$. More precisely, for any $q$-periodic function $g\colon\Z\to\C$, we have
\begin{equation}\label{eq:gngpn}
g(n) ~=~ \sum_{p=0}^{q-1}g(p){\,}g_p(n) ~=~ \sum_{p=0}^{q-1}g(p){\,}g_0(n-p){\,},\qquad n\in\Z.
\end{equation}
To compute the sum $S(n)=\sum_{k=0}^{n-1}g(k) f(k)$, it is then enough to compute the $q$ sums
$$
S_p(n) ~=~ \sum_{k=0}^{n-1}g_p(k)f(k) ~=~ \sum_{k=0}^{n-1}g_0(k-p)f(k){\,},\qquad p=0,\ldots,q-1.
$$
Indeed, using \eqref{eq:gngpn} we then have
\begin{equation}\label{eq:SSp}
S(n) ~=~ \sum_{p=0}^{q-1}g(p){\,}S_p(n){\,},\qquad n\in\N.
\end{equation}

It is now our aim to find explicit conversion formulas between $S_p(n)$ and $T_p(n)$, for every $p\in\{0,\ldots,q-1\}$. The result is given in Proposition~\ref{prop:main} below. We first consider the following fact that can be immediately derived from the definitions of the sums $S_p$ and $T_p$.

\begin{fact}\label{fact:1}
For any $n\in\N$ and any $p\in\{0,\ldots,q-1\}$, we have
$$
S_p(n) ~=~ T_p(\lfloor(n-p-1)/q\rfloor +1).
$$
In particular, we have $S_p(q n+p-i)= T_p(n)$ for any $i\in\{0,\ldots,q-1\}$ such that $qn+p-i\in\N$.
\end{fact}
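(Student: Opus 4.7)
The plan is to unfold $S_p(n)$ using the explicit description of $g_0$ as the indicator of the residue class $0 \pmod{q}$, then reindex the surviving terms to match the definition of $T_p$.

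First I would write
$$
S_p(n) ~=~ \sum_{k=0}^{n-1} g_0(k-p){\,}f(k) ~=~ \sum_{\substack{0\leq k\leq n-1\\ k\equiv p\,(\mathrm{mod}~q)}} f(k),
$$
and substitute $k=qj+p$. The constraint $0\leq qj+p\leq n-1$ combined with $0\leq p\leq q-1$ and $j\in\Z$ is equivalent to $0\leq j\leq\lfloor(n-1-p)/q\rfloor$. Hence
$$
S_p(n) ~=~ \sum_{j=0}^{\lfloor(n-p-1)/q\rfloor} f(qj+p) ~=~ T_p\bigl(\lfloor(n-p-1)/q\rfloor +1\bigr),
$$
provided the upper index is at least $0$, i.e.\ whenever $n\geq p+1$.

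Next I would dispose of the edge case $n\leq p$. In this range the sum $S_p(n)$ is empty because no $k\in\{0,\ldots,n-1\}$ can satisfy $k\equiv p\,(\mathrm{mod}~q)$ (since $0\leq k\leq p-1<p\leq q-1$). On the other side, for $n\leq p$ we have $-q\leq n-p-1\leq -1$, so $\lfloor(n-p-1)/q\rfloor=-1$ and the right-hand side reads $T_p(0)=0$. Thus the identity also holds in this regime.

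Finally, for the ``in particular'' clause, I would just substitute $n\mapsto qn+p-i$ with $0\leq i\leq q-1$ into the formula just established. The argument of the floor becomes $(qn-i-1)/q=n+(-i-1)/q$, and since $-q\leq -i-1\leq -1$ we get $\lfloor(qn-i-1)/q\rfloor=n-1$, giving $S_p(qn+p-i)=T_p(n)$.

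There is no serious obstacle here: the proof is essentially a careful bookkeeping of which indices $k$ contribute to $S_p(n)$, plus a routine check that the floor formula returns $0$ on the small range $n\leq p$ so that the single closed formula covers all $n\in\N$.
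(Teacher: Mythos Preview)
Your proof is correct and is exactly the kind of bookkeeping the paper has in mind: the paper does not spell out a proof of this fact at all, simply stating that it ``can be immediately derived from the definitions of the sums $S_p$ and $T_p$.'' Your unfolding of $g_0(k-p)$ as the indicator of $k\equiv p\pmod q$, the reindexing $k=qj+p$, the careful treatment of the edge case $n\leq p$, and the floor computation for the ``in particular'' clause are all accurate and constitute precisely the verification the paper omits.
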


For any $p\in\{0,\ldots,q-1\}$, let $T_p^+\colon D_p\to\C$ be any extension of $T_p$ to the set $D_p=\{\frac{-p}{q},\frac{1-p}{q},\frac{2-p}{q},\ldots\}$. By definition, we have $\N\subset D_p$ and $T_p^+=T_p$ on $\N$.

\begin{proposition}\label{prop:main}
For any $n\in\N$ and any $p\in\{0,\ldots,q-1\}$, we have $T_p(n) = S_p(qn)$ and
\begin{equation}\label{eq:thmmain}
S_p(n) ~=~ \sum_{k=0}^{q-1}g_0(n+k-p){\,}T_p^+\left(\frac{n+k-p}{q}\right).
\end{equation}
\end{proposition}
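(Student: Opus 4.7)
The plan is to derive both identities from Fact~\ref{fact:1} by a short residue-arithmetic argument. The first identity, $T_p(n)=S_p(qn)$, is immediate: it is Fact~\ref{fact:1} applied with $i=p$, since $qn+p-p=qn\in\N$.

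For the conversion formula~\eqref{eq:thmmain}, the key observation is that $g_0(n+k-p)$ is an indicator, equal to $1$ precisely when $k\equiv p-n\pmod{q}$ and $0$ otherwise. As $k$ ranges over $\{0,1,\ldots,q-1\}$, there is exactly one such value, which I will call $k^*$. Hence the right-hand side of~\eqref{eq:thmmain} collapses to the single term $T_p^+\!\left(\frac{n+k^*-p}{q}\right)$.

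Next I would check that $(n+k^*-p)/q$ lies in $\N$, so that $T_p^+$ automatically coincides with $T_p$ at this point regardless of the chosen extension. By construction $n+k^*-p$ is a multiple of $q$, while the bounds $n\geq 0$, $k^*\geq 0$ and $p\leq q-1$ give $n+k^*-p>-q$; since no multiple of $q$ lies strictly between $-q$ and $0$, the quantity is a nonnegative multiple of $q$. To conclude, I would identify $(n+k^*-p)/q$ with the floor expression appearing in Fact~\ref{fact:1}: writing $n-p=qc+d$ with $c\in\Z$ and $d\in\{0,1,\ldots,q-1\}$, one has $k^*=0$ when $d=0$ and $k^*=q-d$ when $d>0$, so a short case split yields $(n+k^*-p)/q=\lfloor(n-p-1)/q\rfloor+1$ in both cases. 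Combining this with Fact~\ref{fact:1} gives~\eqref{eq:thmmain}.

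The only step that requires any genuine care is the final piece of floor-function bookkeeping; the rest is just unpacking definitions and exploiting the indicator structure of $g_0$.
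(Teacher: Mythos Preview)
Your proposal is correct and follows essentially the same route as the paper's own (first) proof: both arguments exploit the indicator structure of $g_0$ to collapse the sum in \eqref{eq:thmmain} to a single term, identify its index with $\lfloor(n-p-1)/q\rfloor+1$, and then invoke Fact~\ref{fact:1}. The only difference is stylistic: the paper writes the unique $M$ with $k=Mq+p-n\in\{0,\ldots,q-1\}$ directly as $\lfloor(n-p-1)/q\rfloor+1$, whereas you verify this via an explicit case split on the residue $d$ of $n-p$ modulo $q$.
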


\begin{remark}
We observe that each summand for which $n+k-p <0$ in \eqref{eq:thmmain} is zero since in this case we have $n+k\neq p~(\mathrm{mod}~q)$.
\end{remark}

\begin{proof}[Proof of Proposition~\ref{prop:main}]
Let $n\in\N$ and $p\in\{0,\ldots,q-1\}$. The identity $T_p(n) = S_p(qn)$ immediately follows from Fact~\ref{fact:1}. Now, for any $k\in\N$ we have $g_0(n+k-p)=1$ if and only if there exists $M\in\Z$ such that $k=Mq+p-n$. Assuming that $k\in\{0,\ldots,q-1\}$, the latter condition holds if and only if $M=\lfloor (n-p-1)/q\rfloor +1$. Thus, the sum in \eqref{eq:thmmain} reduces to $T_p(\lfloor(n-p-1)/q\rfloor +1)$, which is $S_p(n)$ by Fact~\ref{fact:1}.
\end{proof}

\begin{proof}[Alternative proof of \eqref{eq:thmmain}]
The identity clearly holds for $n=0$ since we have $S_p(0)=0=T_p(0)$. It is then enough to show that \eqref{eq:thmmain} still holds after applying the difference operator $\Delta_n$ to each side. Applying $\Delta_n$ to the right-hand side, we immediately obtain a telescoping sum that reduces to
$$
g_0(n+q-p){\,}T_p^+\left(\frac{n+q-p}{q}\right) - g_0(n-p){\,}T_p^+\left(\frac{n-p}{q}\right),
$$
that is,
$$
g_0(n-p){\,}(\Delta{\,}T_p^+)\left(\frac{n-p}{q}\right).
$$
If $n\neq p ~(\mathrm{mod}~q)$, then $g_0(n-p)=0$ and hence the latter expression reduces to zero. Otherwise, it becomes $g_0(n-p)(\Delta{\,}T_p)(\frac{n-p}{q})$. In both cases, the expression reduces to $g_0(n-p){\,}f(n)$, which is nothing other than $\Delta_n S_p(n)$.
\end{proof}

Let $F(z)=\sum_{n\geq 0}f(n){\,}z^n$ and $F_p(z)=\sum_{n\geq 0}f(qn+p){\,}z^n$ be the generating functions of the sequences $(f(n))_{n\geq 0}$ and $(f(qn+p))_{n\geq 0}$, respectively. The following proposition provides explicit forms of the generating function of the sequence $(S_p(n))_{n\geq 0}$ in terms of $F(z)$ and $F_p(z)$. The proof of this proposition uses a familiar trick for extracting alternate terms of a series; see, e.g., \cite[p.~90]{CulFalRon05} and \cite[p.~89]{Knu97}.

Recall that if $A(z)=\sum_{n\geq 0}f(n){\,}z^n$ is the generating function of a sequence $(a(n))_{n\geq 0}$, then $\frac{1}{1-z}A(z)$ is the generating function of the sequence of the partial sums $(\sum_{k=0}^na(k))_{n\geq 0}$; see, e.g., \cite[{\S}5.4]{GraKnuPat94} and \cite[p.~89]{Knu97}.

\begin{proposition}\label{prop:gf}
If $F(z)$ converges in some disk $|z|<R$, then
$$
\sum_{n\geq 0}S_p(n){\,}z^n ~=~ \frac{z}{q(1-z)}{\,}\sum_{k=0}^{q-1}\omega^{-kp}{\,}F(\omega^kz) ~=~ \frac{z^{p+1}}{1-z}{\,}F_p(z^q).
$$
\end{proposition}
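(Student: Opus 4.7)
The plan is to reduce the proposition to analyzing the generating function
$$A_p(z) ~=~ \sum_{k\geq 0} g_p(k)\, f(k)\, z^k$$
of the filtered sequence, and then to expand $A_p(z)$ in two different ways that match the middle and right-hand sides. Since $S_p(0)=0$ and $S_p(n+1)-S_p(n)=g_p(n)f(n)$, the recalled partial-sum rule for ordinary generating functions, adjusted by a one-step index shift to account for the upper limit $n-1$ in the definition of $S_p(n)$, yields
$$\sum_{n\geq 0} S_p(n)\, z^n ~=~ \frac{z}{1-z}\, A_p(z).$$
Both equalities of the proposition then reduce to two distinct expressions for $A_p(z)$.

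For the middle expression I would use the standard roots-of-unity filter
$$g_p(k) ~=~ \frac{1}{q}\sum_{j=0}^{q-1}\omega^{j(k-p)},$$
which holds because $\sum_{j=0}^{q-1}\omega^{jm}$ equals $q$ when $q$ divides $m$ and vanishes otherwise. Substituting this into $A_p(z)$ and interchanging the finite sum over $j$ with the power series in $k$, which is justified since $F$ converges absolutely on $|z|<R$ and so does each $F(\omega^j z)$, each inner series becomes $\sum_{k\geq 0}f(k)(\omega^j z)^k = F(\omega^j z)$. This gives $A_p(z)=\frac{1}{q}\sum_{k=0}^{q-1}\omega^{-kp}F(\omega^k z)$ and produces the middle equality after multiplying by $z/(1-z)$.

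For the right-hand expression I would instead exploit the direct description $g_p(k)=1$ iff $k=qm+p$ for some $m\in\N$. Reindexing the defining series of $A_p(z)$ accordingly gives
$$A_p(z) ~=~ \sum_{m\geq 0} f(qm+p)\, z^{qm+p} ~=~ z^p\, F_p(z^q),$$
and multiplying by $z/(1-z)$ yields $z^{p+1}F_p(z^q)/(1-z)$. Convergence in the same disk $|z|<R$ is automatic since $z^p F_p(z^q)$ is a sub-series of $F(z)$. I do not anticipate any real obstacle: the argument is a routine combination of the partial-sum rule, the roots-of-unity filter, and a direct re-indexing; the only point requiring a little care is the index shift that produces the factor $z$ rather than $1$ in the numerator, stemming from the upper limit $n-1$ in the definition of $S_p$.
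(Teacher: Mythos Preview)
Your proposal is correct and follows essentially the same route as the paper: both introduce the intermediate series $A_p(z)=\sum_{k\geq 0}g_p(k)f(k)z^k$, rewrite it via the roots-of-unity identity $g_p(k)=\frac{1}{q}\sum_{j}\omega^{j(k-p)}$ to obtain the middle expression and via the reindexing $k=qm+p$ to obtain the right-hand one, and then apply the partial-sum rule with the one-step shift to pass to $\sum_{n\geq 0}S_p(n)z^n$. Your write-up is slightly more explicit about the index shift and the convergence justification, but there is no substantive difference in method.
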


\begin{proof}
We first observe that the identity $\frac{1}{q}{\,}\sum_{k=0}^{q-1}\omega^{kn}=g_0(n)$ holds for any $n\in\Z$. For any $z\in\C$ such that $|z|<R$, we then have
\begin{eqnarray*}
\frac{z}{q(1-z)}{\,}\sum_{k=0}^{q-1}\omega^{-kp}{\,}F(\omega^kz)
&=& \frac{z}{1-z}{\,}\sum_{n\geq 0}g_p(n)f(n){\,}z^n\\
&=& \sum_{n\geq 0}S_p(n+1){\,}z^{n+1} ~=~ \sum_{n\geq 0}S_p(n){\,}z^n,
\end{eqnarray*}
which proves the first formula. For the second formula, we simply observe that
$$
\sum_{n\geq 0}g_p(n)f(n){\,}z^n ~=~ \sum_{n\geq 0}f(qn+p){\,}z^{qn+p} ~=~ z^p{\,}F_p(z^q).
$$
This completes the proof.
\end{proof}

We end this section by providing explicit forms of the function $g_0(n)$. For instance, it is easy to verify that
$$
g_0(n) ~=~ \left\lfloor\frac{n}{q}\right\rfloor - \left\lfloor\frac{n-1}{q}\right\rfloor ~=~ \Delta_n\left\lfloor\frac{n-1}{q}\right\rfloor.
$$
As already observed in the proof of Proposition~\ref{prop:gf}, we also have
\begin{equation}\label{eq:g01}
g_0(n) ~=~ \frac{1}{q}{\,}\sum_{j=0}^{q-1}\omega^{j n} ~=~ \frac{1}{q}{\,}\sum_{j=0}^{q-1}\cos\left(j\frac{2n\pi}{q}\right).
\end{equation}
Alternatively, we also have the following expression (see also \cite[p.~41]{Ela05})
$$
g_0(n) ~=~
\begin{cases}
\frac{1}{q}+\frac{2}{q}\,\sum_{j=1}^{(q-1)/2}\cos(j\frac{2n\pi}{q}), & \text{if $q$ is odd},\\
\frac{1}{q}+\frac{1}{q}(-1)^n+\frac{2}{q}\,\sum_{j=1}^{(q/2)-1}\cos(j\frac{2n\pi}{q}), & \text{if $q$ is even},
\end{cases}
$$
or equivalently,
$$
g_0(n) ~=~ \frac{1}{q}+\frac{(-1)^n+(-1)^{n+q}}{2q}+\frac{2}{q}\,\sum_{j=1}^{\lfloor (q-1)/2\rfloor}\cos\left(j\frac{2n\pi}{q}\right).
$$

\section{Some applications}

In this section we consider some examples to illustrate and demonstrate the use of Propositions~\ref{prop:main} and \ref{prop:gf}. A few of these examples make use of the \emph{harmonic number} with a complex argument, which is defined by the series
$$
H_z ~=~ \sum_{n\geq 1}\left(\frac{1}{n}-\frac{1}{n+z}\right),\qquad z\in\C\setminus\{-1,-2,\ldots\},
$$
(see, e.g., \cite[p.~311, Ex.~6.22]{GraKnuPat94} and \cite[p.~95, Ex.~19]{Knu97}).

\begin{remark} Formula~\eqref{eq:thmmain} is clearly helpful to obtain an explicit expression for the sum $S_p(n)$ whenever closed-form representations of the associated sums $T_p(n)$ for $p=0,\ldots,q-1$ are available. Otherwise, the formula might be of little interest. For instance, the formula will not be very useful to obtain an explicit expression for the \emph{number of derangements} (see, e.g., \cite[p.~195]{GraKnuPat94})
$$
d(n) ~=~ n!\,\sum_{k=0}^n(-1)^k\,\frac{1}{k!}{\,}.
$$
Indeed, the associated sums $\sum_{k=0}^n\frac{1}{(2k)!}$ and $\sum_{k=0}^n\frac{1}{(2k+1)!}$ have no known closed-form representations.
\end{remark}

\subsection{Sums weighted by a $4$-periodic sequence}

Suppose we wish to provide a closed-form representation of the sum
$$
S(n) ~=~ \sum_{k=0}^{n-1}\sin\left(k\frac{\pi}{2}\right)f(k){\,},\qquad n\in\N,
$$
where the function $g(k)=\sin(k\frac{\pi}{2})$ is $4$-periodic. By \eqref{eq:SSp} we then have
$$
S(n) ~=~ \sum_{p=0}^{3}\sin\left(p\frac{\pi}{2}\right)S_p(n) ~=~ S_1(n)-S_3(n){\,},\qquad n\in\N,
$$
where
$$
S_p(n) ~=~ \sum_{k=0}^{n-1}g_0(k-p)f(k){\,},\qquad p\in\{1,3\},
$$
and
$$
g_0(n) ~=~ \frac{1}{4}+\frac{1}{4}(-1)^n+\frac{1}{2}\,\cos\left(n\frac{\pi}{2}\right),\qquad n\in\Z.
$$
Now, if an explicit expression for the sum $T_p(n)=\sum_{k=0}^{n-1}f(4k+p)$ for any $p\in\{1,3\}$ is available, then a closed-form expression for $S_p(n)$ can be immediately obtained by \eqref{eq:thmmain}.

Also, if $F(z)$ denotes the generating function of the sequence $(f(n))_{n\geq 0}$, then by Proposition~\ref{prop:gf} the generating function of the sequence $(S(n))_{n\geq 0}$ is simply given by
$$
\frac{iz}{2(1-z)}\big(F(-iz)-F(iz)\big).
$$

To illustrate, let us consider a few examples.

\begin{example}
Suppose that $f(n)=\log(n+1)$ for all $n\in\N$. It is not difficult to see that
$$
T_p(n) ~=~ \log\left(4^n{\,}\frac{\Gamma\left(n+\frac{p+1}{4}\right)}{\Gamma\left(\frac{p+1}{4}\right)}\right){\,},\qquad p\in\{1,3\}.
$$
Defining $T_p^+$ on $D_p=\{\frac{-p}{4},\frac{1-p}{4},\frac{2-p}{4},\ldots\}$ by
$$
T_p(x) ~=~ \log\left(4^x{\,}\frac{\Gamma\left(x+\frac{p+1}{4}\right)}{\Gamma\left(\frac{p+1}{4}\right)}\right){\,},\qquad p\in\{1,3\},
$$
and then using \eqref{eq:thmmain} we obtain
$$
S_p(n) ~=~ \sum_{k=0}^{3} g_0(n+k-p)\,\log\left(4^{\frac{n+k-p}{4}}{\,}\frac{\Gamma\left(\frac{n+k+1}{4}\right)}{\Gamma\left(\frac{p+1}{4}\right)}\right){\,},\qquad p\in\{1,3\}.
$$
Since $S(n)=S_1(n)-S_3(n)$, after some algebra we finally obtain
$$
S(n) ~=~ \log\left(\frac{2}{\sqrt{\pi}}\right)
+\cos\left(n\frac{\pi}{2}\right)\,\log\left(\frac{\Gamma\left(\frac{n+2}{4}\right)}{2\,\Gamma\left(\frac{n+4}{4}\right)}\right)
+\sin\left(n\frac{\pi}{2}\right)\,\log\left(\frac{\Gamma\left(\frac{n+1}{4}\right)}{2\,\Gamma\left(\frac{n+3}{4}\right)}\right).
$$
\end{example}

\begin{example}\label{ex:over}
Suppose that $f(n)=\frac{1}{n+1}$ for all $n\in\N$. Here, one can show that
$$
T_p(n) ~=~ \frac{1}{4}\left(H_{n+\frac{p+1}{4}-1}-H_{\frac{p+1}{4}-1}\right){\,},\qquad p\in\{1,3\}.
$$
Defining $T_p^+$ on $D_p=\{\frac{-p}{4},\frac{1-p}{4},\frac{2-p}{4},\ldots\}$ by
$$
T_p(x) ~=~ \frac{1}{4}\left(H_{x+\frac{p+1}{4}-1}-H_{\frac{p+1}{4}-1}\right){\,},\qquad p\in\{1,3\},
$$
and then using \eqref{eq:thmmain} we obtain
$$
S_p(n) ~=~ \sum_{k=0}^{3} g_0(n+k-p)\,\frac{1}{4}\left(H_{\frac{n+k+1}{4}-1}-H_{\frac{p+1}{4}-1}\right){\,},\qquad p\in\{1,3\}.
$$
After simplifying the resulting expressions, we finally obtain
$$
S(n) ~=~ \frac{1}{4}\,\log 4
+\frac{1}{4}\,\cos\left(n\frac{\pi}{2}\right)\left(H_{\frac{n-2}{4}}-H_{\frac{n}{4}}\right)
+\frac{1}{4}\,\sin\left(n\frac{\pi}{2}\right)\left(H_{\frac{n-3}{4}}-H_{\frac{n-1}{4}}\right).
$$
Since $F(z)=-\frac{1}{z}\log(1-z)$, the generating function of the sequence $(S(n))_{n\geq 0}$ is given by
$$
\frac{1}{2(1-z)}\big(\log(1-iz)+\log(1+iz)\big).
$$
\end{example}

\begin{example}
Suppose that $f(n)=H_n$ for all $n\in\N$. That is, we are to evaluate the sum
$$
S(n) ~=~ \sum_{k=0}^{n-1}\sin\left(k\frac{\pi}{2}\right)H_k{\,},\qquad n\in\N.
$$
Using summation by parts we obtain
\begin{eqnarray*}
S(n) &=& -\frac{1}{\sqrt{2}}\,\sum_{k=0}^{n-1}\left(\Delta_k\sin\left(k\frac{\pi}{2}+\frac{\pi}{4}\right)\right)H_k\\
&=& -\frac{1}{\sqrt{2}}\,\sin\left(n\frac{\pi}{2}+\frac{\pi}{4}\right)H_n
+\frac{1}{\sqrt{2}}\,\sum_{k=0}^{n-1}\sin\left(k\frac{\pi}{2}+\frac{3\pi}{4}\right)\frac{1}{k+1}{\,},
\end{eqnarray*}
where the latter sum can be evaluated as in Example~\ref{ex:over}. After simplification we obtain
\begin{eqnarray*}
S(n) &=& \frac{\pi-2\ln 2}{8}+\frac{1}{8}\,\cos\left(n\frac{\pi}{2}\right)
\left(H_{\frac{n}{4}}-H_{\frac{n-1}{4}}-H_{\frac{n-2}{4}}+H_{\frac{n-3}{4}}-4H_n\right)\\
&& \null +\frac{1}{8}\,\sin\left(n\frac{\pi}{2}\right)
\left(H_{\frac{n}{4}}+H_{\frac{n-1}{4}}-H_{\frac{n-2}{4}}-H_{\frac{n-3}{4}}-4H_n\right).
\end{eqnarray*}

Using the classical multiplication formula (see, e.g., \cite[{\S}6.4.8]{AbrSte72})
$$
4H_x ~=~ H_{\frac{x}{4}}+H_{\frac{x-1}{4}}+H_{\frac{x-2}{4}}+H_{\frac{x-3}{4}}+4\ln 4
$$
we finally obtain
\begin{eqnarray*}
S(n) &=& \frac{\pi-2\ln 2}{8}-\frac{1}{4}\,\cos\left(n\frac{\pi}{2}\right)
\left(H_{\frac{n-1}{4}}+H_{\frac{n-2}{4}}+4\ln 2\right)\\
&& \null -\frac{1}{4}\,\sin\left(n\frac{\pi}{2}\right)
\left(H_{\frac{n-2}{4}}+H_{\frac{n-3}{4}}+4\ln 2\right).
\end{eqnarray*}
\end{example}

\begin{example}
Let us consider the following series
$$
S ~=~ \sum_{k=1}^{\infty}\sin\left(k\frac{\pi}{2}\right)\frac{1}{k^2}{\,}.
$$
In this case the function $f\colon\N\to\R$ is defined by $f(0)=0$ and $f(k)=1/k^2$ for all $k\geq 1$. By using the identity $S_p(4n)=T_p(n)$ (see Proposition~\ref{prop:main}) we immediately obtain
$$
S ~=~ S_1(\infty)-S_3(\infty) ~=~ T_1(\infty)-T_3(\infty) ~=~ \sum_{k=0}^{\infty}\left(\frac{1}{(4k+1)^2}-\frac{1}{(4k+3)^2}\right).
$$
Actually, this expression is nothing other than the Catalan constant
$$
G ~=~ \sum_{k=0}^{\infty}(-1)^k\frac{1}{(2k+1)^2} ~=~ 0.915965594\ldots
$$
\end{example}

\subsection{Alternating sums}

Consider the alternating sum
$$
S(n) ~=~ \sum_{k=0}^{n-1}(-1)^kf(k),\qquad n\in\N.
$$
Here we clearly have $q=2$. Using \eqref{eq:SSp} and \eqref{eq:thmmain} we immediately obtain
\begin{eqnarray*}
S(n) ~=~ S_0(n)-S_1(n)
&=& g_0(n){\,}T_0^+\Big(\frac{n}{2}\Big)+g_0(n+1){\,}T_0^+\Big(\frac{n+1}{2}\Big)\\
&& \null -g_0(n-1){\,}T_1^+\Big(\frac{n-1}{2}\Big)-g_0(n){\,}T_1^+\Big(\frac{n}{2}\Big),
\end{eqnarray*}
which requires the explicit computation of the sums $T_0$ and $T_1$. Alternatively, since $(-1)^k=2{\,}g_0(k)-1$, setting $S_f(n)=\sum_{k=0}^{n-1}f(k)$ we also have
$$
S(n) ~=~ 2{\,}S_0(n)-S_f(n) ~=~  2{\,}g_0(n){\,}T_0^+\Big(\frac{n}{2}\Big)+2{\,}g_0(n+1){\,}T_0^+\Big(\frac{n+1}{2}\Big)-S_f(n),
$$
that is,
\begin{equation}\label{eq:7sdf}
S(n) ~=~ \left(T_0^+\Big(\frac{n}{2}\Big)+T_0^+\Big(\frac{n+1}{2}\Big)-S_f(n)\right)
+(-1)^n\left(T_0^+\Big(\frac{n}{2}\Big)-T_0^+\Big(\frac{n+1}{2}\Big)\right),
\end{equation}
which requires the explicit computation of the sums $T_0$ and $S_f$. It is then easy to compute the sum $T_1$ since by Proposition~\ref{prop:main} we have
$$
T_1(n) ~=~ S_1(2n) ~=~ \frac{1}{2}(S_f(2n)-S(2n)).
$$

The following proposition shows that the expression $T_0^+\big(\frac{n}{2}\big)+T_0^+\big(\frac{n+1}{2}\big)-S_f(n)$ in \eqref{eq:7sdf} can be made independent of $n$ by choosing an appropriate extension $T_0^+$. In this case, that expression is simply given by the constant $T_0^+(\frac{1}{2})$ and hence no longer requires the computation of $S_f(n)$.

\begin{proposition}\label{prop:7sdf}
The following conditions are equivalent.
\begin{enumerate}
\item[(i)] $T_0^+(\frac{n}{2})+T_0^+(\frac{n+1}{2})-S_f(n)$ is constant on $\N$.
\item[(ii)] We have $T_0^+(\frac{n}{2}+1)-T_0^+(\frac{n}{2}) ~=~ f(n)$ for all odd $n\in\N$.
\item[(iii)] $T_0^+(n+\frac{1}{2})-T_1(n)$ is constant on $\N$.
\end{enumerate}
\end{proposition}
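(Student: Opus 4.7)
The plan is to establish the two equivalences (i)$\Leftrightarrow$(ii) and (ii)$\Leftrightarrow$(iii) by applying the difference operator $\Delta_n$ to the candidate constant expressions in (i) and (iii). A function on $\N$ is constant if and only if $\Delta_n$ annihilates it, and this reduces each of the two ``constancy'' statements to a clean pointwise identity that can be compared directly with (ii).

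First I would tackle (i)$\Leftrightarrow$(ii). Applying $\Delta_n$ to $T_0^+(\tfrac{n}{2})+T_0^+(\tfrac{n+1}{2})-S_f(n)$, the two middle terms telescope and I get
$$
T_0^+\Bigl(\tfrac{n}{2}+1\Bigr)-T_0^+\Bigl(\tfrac{n}{2}\Bigr)-f(n),
$$
so (i) is equivalent to $T_0^+(\tfrac{n}{2}+1)-T_0^+(\tfrac{n}{2})=f(n)$ for \emph{every} $n\in\N$. For even $n=2m$ this identity is automatic: since $T_0^+$ extends $T_0$, the left-hand side is $T_0(m+1)-T_0(m)=f(2m)=f(n)$ by the defining telescoping property of $T_0$. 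Hence the condition is nontrivial only for odd $n$, which is exactly (ii).

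Next, for (ii)$\Leftrightarrow$(iii), I apply $\Delta_n$ to $T_0^+(n+\tfrac{1}{2})-T_1(n)$ and obtain
$$
T_0^+\Bigl(n+\tfrac{3}{2}\Bigr)-T_0^+\Bigl(n+\tfrac{1}{2}\Bigr)-f(2n+1).
$$
Writing $m=2n+1$, this is precisely $T_0^+(\tfrac{m}{2}+1)-T_0^+(\tfrac{m}{2})-f(m)$ with $m$ ranging over the odd non-negative integers as $n$ ranges over $\N$. Thus (iii) (constancy on $\N$) is equivalent to the pointwise identity in (ii), which closes the circle.

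There is no real obstacle; the only subtlety is remembering that the even-index case of the identity $T_0^+(\tfrac{n}{2}+1)-T_0^+(\tfrac{n}{2})=f(n)$ holds for free because $T_0^+$ coincides with $T_0$ on $\N$ and $T_0$ is already the chosen anti-difference of $n\mapsto f(2n)$. This is what lets (i) be equivalent to the restriction of that identity to odd $n$ rather than to all of $\N$.
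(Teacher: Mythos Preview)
Your proposal is correct and follows essentially the same approach as the paper: applying $\Delta_n$ to the candidate constant expressions in (i) and (iii), reducing each to the identity $T_0^+(\tfrac{n}{2}+1)-T_0^+(\tfrac{n}{2})=f(n)$, and observing that the even-$n$ case holds automatically because $T_0^+$ extends the anti-difference $T_0$. The telescoping computations and the substitution $m=2n+1$ for the (ii)$\Leftrightarrow$(iii) equivalence match the paper's argument exactly.
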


\begin{proof}
Assertion (i) holds if and only if
$$
\Delta_n\left(T_0^+\Big(\frac{n}{2}\Big)+T_0^+\Big(\frac{n+1}{2}\Big)\right) ~=~ f(n),\qquad n\in\N,
$$
or equivalently,
$$
T_0^+\Big(\frac{n}{2}+1\Big)-T_0^+\Big(\frac{n}{2}\Big) ~=~ f(n),\qquad n\in\N,
$$
where the latter identity holds whenever $n$ is even. This proves the equivalence between assertions (i) and (ii).

Replacing $n$ by $2n+1$ in the latter identity, we see that assertion (ii) is equivalent to
$$
T_0^+\Big(n+\frac{3}{2}\Big)-T_0^+\Big(n+\frac{1}{2}\Big) ~=~ f(2n+1),\qquad n\in\N,
$$
that is,
$$
\Delta_n T_0^+\Big(n+\frac{1}{2}\Big) ~=~ \Delta_n T_1(n),\qquad n\in\N.
$$
This proves the equivalence between assertions (ii) and (iii).
\end{proof}

Interestingly, we also have
$$
\sum_{k=0}^{2n}(-1)^kf(k) ~=~ \sum_{k=0}^nf(2k)-\sum_{k=0}^{n-1}f(2k+1) ~=~ T_0(n+1)-T_1(n).
$$

Also, if $F(z)$ denotes the generating function of the sequence $(f(n))_{n\geq 0}$, then by Proposition~\ref{prop:gf} the generating function of the sequence $(S(n))_{n\geq 0}$ is simply given by $\frac{z}{1-z}{\,}F(-z)$.

\begin{example}\label{ex:alt1k}
Let us provide an explicit expression for the sum
$$
\sum_{k=1}^{n-1}(-1)^k{\,}\frac{1}{k}{\,},\qquad n\in\N\setminus\{0\}.
$$
Let $f\colon\N\to\R$ be defined by $f(0)=0$ and $f(k)=\frac{1}{k}$ for all $k\geq 1$. Then $S_f\colon\N\to\R$ is defined by $S_f(0)=0$ and $S_f(n)=\sum_{k=1}^{n-1}\frac{1}{k} =H_{n-1}$ for all $n\geq 1$. Also, $T_0\colon\N\to\R$ is defined by $T_0(0)=0$ and $T_0(n) = \sum_{k=1}^{n-1}\frac{1}{2k} = \frac{1}{2}{\,}H_{n-1}$ for all $n\geq 1$. Finally, define the function $T_0^+$ on $D_0=\frac{1}{2}\N=\{\frac{0}{2},\frac{1}{2},\frac{2}{2},\ldots\}$ by $T_0^+(0)=0$ and $T_0^+(x) = \frac{1}{2}{\,}H_{x-1}$ if $x>0$. It is then easy to see that
$$
T_0^+\left(\frac{n}{2}+1\right)-T_0^+\left(\frac{n}{2}\right) ~=~ f(n),\qquad n\in\N.
$$
Using \eqref{eq:7sdf} and Proposition~\ref{prop:7sdf}, we finally obtain
$$
\sum_{k=1}^{n-1}(-1)^k{\,}\frac{1}{k} ~=~ -\ln 2+\frac{1}{2}(-1)^n\left(H_{\frac{n-2}{2}}-H_{\frac{n-1}{2}}\right),\qquad n\in\N\setminus\{0\}.
$$
In particular, using the classical duplication formula $2H_x = H_{\frac{x}{2}}+H_{\frac{x-1}{2}}+2\ln 2$ (see, e.g., \cite[{\S}6.3.8]{AbrSte72}), we obtain
$$
\sum_{k=1}^{2n}(-1)^{k+1}{\,}\frac{1}{k} ~=~ H_{2n}-H_n ~=~ \sum_{k=1}^n\frac{1}{n+k}{\,},\qquad n\in\N.
$$
Also, since $F(z)=-\log(1-z)$, the generating function of the sequence $(S(n))_{n\geq 0}$ is given by $-\frac{z}{1-z}\,\log(1+z)$.
\end{example}

\subsection{Sums weighted by a $3$-periodic sequence}

Suppose we wish to evaluate the sum
$$
S(n) ~=~ \sum_{k=0}^{n-1}\cos\left(\frac{2k\pi}{3}\right)f(k),\qquad n\in\N,
$$
where the function $g(k)=\cos(\frac{2k\pi}{3})$ is 3-periodic. By \eqref{eq:SSp}, we have
$$
S(n) ~=~ S_0(n)-\frac{1}{2}S_1(n)-\frac{1}{2}S_2(n),
$$
which, using \eqref{eq:thmmain}, requires the explicit computation of the sums $T_0$, $T_1$, and $T_2$. Alternatively, since $\cos(\frac{2k\pi}{3})=\frac{3}{2}g_0(k)-\frac{1}{2}$, setting $S_f(n)=\sum_{k=0}^{n-1}f(k)$ we also have
$$
S(n) ~=~ \frac{3}{2}S_0(n)-\frac{1}{2}S_f(n),
$$
that is, using \eqref{eq:thmmain},
\begin{eqnarray*}
S(n) &=& \frac{1}{2}\left(T_0^+\Big(\frac{n}{3}\Big)+T_0^+\Big(\frac{n+1}{3}\Big)+T_0^+\Big(\frac{n+2}{3}\Big)-S_f(n)\right)\\
&& \null + \cos\Big(\frac{2n\pi}{3}\Big)\left(T_0^+\Big(\frac{n}{3}\Big)-\frac{1}{2}{\,}T_0^+\Big(\frac{n+1}{3}\Big)
-\frac{1}{2}{\,}T_0^+\Big(\frac{n+2}{3}\Big)\right)\\
&& \null + \sin\Big(\frac{2n\pi}{3}\Big)\left(-\frac{\sqrt{3}}{2}{\,}T_0^+\Big(\frac{n+1}{3}\Big)
+\frac{\sqrt{3}}{2}{\,}T_0^+\Big(\frac{n+2}{3}\Big)\right),
\end{eqnarray*}
which requires the explicit computation of the sums $T_0$ and $S_f$ only.

We then have the following proposition, which can be established in the same way as Proposition~\ref{prop:7sdf}. We thus omit the proof.

\begin{proposition}\label{prop:7sdf3}
The following conditions are equivalent.
\begin{enumerate}
\item[(i)] $T_0^+(\frac{n}{3})+T_0^+(\frac{n+1}{3})+T_0^+(\frac{n+2}{3})-S_f(n)$ is constant on $\N$.
\item[(ii)] For any $p\in\{1,2\}$, we have $(\Delta{\,}T_0^+)(n+\frac{p}{3})=f(3n+p)$.
\item[(iii)] For any $p\in\{1,2\}$, $T_0^+(n+\frac{p}{3})-T_p(n)$ is constant on $\N$.
\end{enumerate}
\end{proposition}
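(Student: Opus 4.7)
The plan is to follow the same strategy as the proof of Proposition~\ref{prop:7sdf}, replacing the 2-periodic telescoping with its 3-periodic analogue. The key observation is that the expression in (i) is a telescoping sum (under $\Delta_n$) precisely because each $T_0^+$ is evaluated at three consecutive arguments differing by $\tfrac13$.

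For the equivalence (i) $\Leftrightarrow$ (ii), I would apply $\Delta_n$ to condition (i), which reads
$$
\Delta_n\!\left(T_0^+\Big(\tfrac{n}{3}\Big)+T_0^+\Big(\tfrac{n+1}{3}\Big)+T_0^+\Big(\tfrac{n+2}{3}\Big)\right) ~=~ f(n),\qquad n\in\N.
$$
The left-hand side telescopes to $T_0^+(\tfrac{n}{3}+1)-T_0^+(\tfrac{n}{3})$, so (i) is equivalent to
$$
T_0^+\Big(\tfrac{n}{3}+1\Big)-T_0^+\Big(\tfrac{n}{3}\Big) ~=~ f(n),\qquad n\in\N.
$$
When $n\equiv 0\pmod 3$, say $n=3m$, the left-hand side is $T_0(m+1)-T_0(m)=f(3m)=f(n)$ by definition of $T_0$, so this identity is automatic. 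The nontrivial cases are $n=3k+p$ with $p\in\{1,2\}$, where the identity becomes $(\Delta T_0^+)(k+\tfrac{p}{3})=f(3k+p)$, which is exactly condition (ii).

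For (ii) $\Leftrightarrow$ (iii), I would observe that (iii) asserts $\Delta_n\bigl(T_0^+(n+\tfrac{p}{3})-T_p(n)\bigr)=0$ on $\N$ for each $p\in\{1,2\}$. Since $\Delta_n T_p(n)=f(3n+p)$ by definition of $T_p$, this is equivalent to $\Delta_n T_0^+(n+\tfrac{p}{3})=f(3n+p)$, which is condition (ii).

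I do not anticipate any real obstacle: the telescoping is immediate and the bookkeeping on residues modulo $3$ is straightforward. The only small subtlety (the same one present in Proposition~\ref{prop:7sdf}) is to notice that the case $n\equiv 0\pmod 3$ in the telescoped identity is redundant because $T_0^+$ already agrees with $T_0$ at integer arguments, so the constraint only genuinely bites on the two residue classes $p=1$ and $p=2$. Once that is pointed out, the three conditions transparently reduce to one another.
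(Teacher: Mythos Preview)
Your proposal is correct and follows exactly the approach the paper indicates: the paper omits the proof of Proposition~\ref{prop:7sdf3}, noting only that it ``can be established in the same way as Proposition~\ref{prop:7sdf},'' and your argument is precisely the $q=3$ analogue of that proof, with the telescoping under $\Delta_n$ and the residue-class analysis carried out as expected.
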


Also, if $F(z)$ denotes the generating function of the sequence $(f(n))_{n\geq 0}$, then by Proposition~\ref{prop:gf} the generating function of the sequence $(S(n))_{n\geq 0}$ is simply given by $\frac{z}{2(1-z)}{\,}(F(\omega z)+F(\omega^{-1}z))$.

\begin{example}
Let us compute the sum
$$
S(n) ~=~ \sum_{k=1}^{n-1}\cos\left(\frac{2k\pi}{3}\right)\log(k),\qquad n\in\N\setminus\{0\}.
$$
Let $f\colon\N\to\R$ be defined by $f(0)=0$ and $f(k)=\log(k)$ for all $k\geq 1$. As observed in Example~\ref{ex:1}, $T_0\colon\N\to\R$ is defined by $T_0(0)=0$ and $T_0(n)=\log(3^{n-1}\Gamma(n))$ for all $n\geq 1$. Also, define the function $T_0^+$ on $D_0=\frac{1}{3}\N$ by $T_0^+(0)=0$ and $T_0^+(x)=\log(3^{x-1}\Gamma(x))$ if $x>0$. We then see that condition (ii) of Proposition~\ref{prop:7sdf3} holds.

Finally, after some algebra we obtain
\begin{eqnarray*}
S(n) &=& \log\left(\frac{\sqrt{2\pi}}{3^{3/4}}\right)
+\frac{1}{2}\,\cos\Big(\frac{2n\pi}{3}\Big)\,\log\left(\frac{3^{n-3/2}\,\Gamma(\frac{n}{3})^3}{2\pi\,\Gamma(n)}\right)\\
&& \null +\frac{\sqrt{3}}{2}\,\sin\Big(\frac{2n\pi}{3}\Big)\,\log\left(\frac{3^{1/3}\,\Gamma(\frac{n+2}{3})}{\Gamma(\frac{n+1}{3})}\right),
\end{eqnarray*}
which, put in another form, is the function obtained in Example~\ref{ex:1}.
\end{example}

\subsection{Computing $T_p(n)$ from $S_p(n)$}

We now present two examples where the computation of the sum $T_p(n)$ can be made easier by first computing the sum $S_p(n)$. According to Proposition~\ref{prop:main}, the corresponding conversion formula is simply given by $T_p(n)=S_p(qn)$ for all $n\in\N$.

\begin{example}\label{ex:bin}
Let $m\in\N\setminus\{0\}$ and $p\in\{0,\ldots,q-1\}$. Suppose we wish to provide a closed-form representation of the sum
$$
\sum_{k\geq 0}{m\choose q k+p}h(q k+p){\,}.
$$
for some function $h\colon\N\to\C$. Considering the function $f(k)={m\choose k}h(k)$, the sum above is nothing other than $T_p(n)$ for any $n\geq\lfloor\frac{m-p}{q}\rfloor +1$. Actually, we can even write
$$
\sum_{k\geq 0}{m\choose q k+p}h(q k+p) ~=~ T_p(\infty) ~=~ S_p(\infty) ~=~ \sum_{k\geq 0}{m\choose k}g_0(k-p)h(k){\,}.
$$
Using \eqref{eq:g01}, the latter expression then becomes
$$
\frac{1}{q}{\,}\sum_{j=0}^{q-1}{\,}\sum_{k\geq 0}{m\choose k}\omega^{j(k-p)}h(k) ~=~ \frac{1}{q}{\,}\sum_{j=0}^{q-1}\omega^{-jp}{\,}\sum_{k\geq 0}{m\choose k}\omega^{jk}{\,}h(k),
$$
where the inner sum can sometimes be evaluated explicitly.

For instance, if $h(k)=1$ (see, e.g., \cite[p.~71, Ex.~38]{Knu97}), then using the classical identity $e^{i\theta}+1=2{\,}e^{i\frac{\theta}{2}}\cos\frac{\theta}{2}$ the latter expression reduces to
$$
\frac{1}{q}{\,}\sum_{j=0}^{q-1}\omega^{-jp}{\,}(\omega^j+1)^m ~=~ \frac{2^m}{q}{\,}\sum_{j=0}^{q-1}e^{i\frac{j(m-2p)\pi}{q}}\cos^m\frac{j\pi}{q}{\,}.
$$
Since this quantity is known to be real, we may take the real part and finally obtain
$$
\sum_{k\geq 0}{m\choose q k+p} ~=~ \frac{2^m}{q}{\,}\sum_{j=0}^{q-1}\cos\frac{j(m-2p)\pi}{q}\,\cos^m\frac{j\pi}{q}{\,}.
$$

To give a second example, if $h(k)=\frac{1}{k+1}=\int_0^1x^k{\,}dx$, then we proceed similarly and obtain
\begin{multline*}
\sum_{k\geq 0}{m\choose qk+p}\frac{1}{qk+p+1} ~=\\ \frac{1}{q(m+1)}{\,}\sum_{j=0}^{q-1}\left(2^{m+1}\cos\frac{j(m-2p-1)\pi}{q}\,\cos^{m+1}\frac{j\pi}{q}-\cos\frac{2j(p+1)\pi}{q}\right).
\end{multline*}

The case where $m$ is not an integer is more delicate. Fixing $z\in\C$ and letting $f(k)={z\choose k}$, we have for instance
$$
\sum_{k=0}^{n-1}{z\choose 2k+1} ~=~ T_1(n) ~=~ S_1(2n) ~=~ S_1(2n+1) ~=~ \sum_{k=0}^{2n}\frac{1-(-1)^k}{2}{\,}{z\choose k}.
$$
Using the identity $\sum_{k=0}^n(-1)^k{z\choose k}=(-1)^n{z-1\choose n}$ (see, e.g., \cite[p.~165]{GraKnuPat94}), we obtain
$$
\sum_{k=0}^{n-1}{z\choose 2k+1} ~=~ \frac{1}{2}\,\sum_{k=0}^{2n}{z\choose k}-\frac{1}{2}{\,}{z-1\choose 2n},
$$
which is not a closed-form expression.
\end{example}

\begin{example}
Let us illustrate the use of Proposition~\ref{prop:main} by proving the following Gauss formula, which provides an explicit representation of the harmonic number for fractional arguments (see, e.g., \cite[p.~95]{Knu97} and \cite[p.~30]{SriCho12}). For any integer $p\in\{1,\ldots,q-1\}$, we have
$$
H_{\frac{p}{q}} ~=~ \frac{q}{p}-\ln(2q)-\frac{\pi}{2}\cot\frac{p\pi}{q}+2\sum_{j=1}^{\lfloor (q-1)/2\rfloor}\,\cos\left(\frac{2jp\pi}{q}\right)\ln\left(\sin\frac{j\pi}{q}\right).
$$

To establish this formula, define $f\colon\N\to\R$ by $f(0)=0$ and $f(k)=\frac{1}{k}$ for $k\geq 1$. We then have
\begin{eqnarray*}
\frac{1}{q}{\,}H_{\frac{p}{q}} &=& \sum_{n\geq 1}\left(\frac{1}{q n}-\frac{1}{q n+p}\right)
~=~ \frac{1}{p}+\lim_{N\to\infty}\left(\sum_{n=1}^{N-1}\frac{1}{q n}-\sum_{n=0}^{N-1}\frac{1}{q n+p}\right)\\
&=& \frac{1}{p}+\lim_{N\to\infty} (T_0(N)-T_p(N))\\
&=& \frac{1}{p}+\lim_{N\to\infty} (S_0(qN)-S_p(qN+p))\\
&=& \frac{1}{p}+\lim_{N\to\infty} \left(\sum_{n=1}^{qN-1}\frac{g_0(n)-g_0(n-p)}{n}-\sum_{n=qN}^{qN+p-1}\frac{g_0(n-p)}{p}\right)\\
&=& \frac{1}{p}+\sum_{n\geq 1}\frac{1}{n}(g_0(n)-g_0(n-p)){\,},
\end{eqnarray*}
that is, using \eqref{eq:g01},
$$
H_{\frac{p}{q}} ~=~  \frac{q}{p}+\sum_{j=1}^{q-1}(1-\omega^{-jp})\sum_{n\geq 1}\frac{\omega^{jn}}{n}{\,}.
$$
Since $\omega^j\neq 1$ for $j=1,\ldots,q-1$, the inner series converges to $-\log(1-\omega^j)$, where the complex logarithm satisfies $\log 1=0$.

Using the identity $1-e^{i\theta}=-2{\,}i{\,}e^{i\frac{\theta}{2}}\sin\frac{\theta}{2}$, we obtain
$$
\log(1-\omega^j) ~=~ \ln\left(2\sin\frac{j\pi}{q}\right)+i\left(\frac{j\pi}{q}-\frac{\pi}{2}\right).
$$

It follows that
\begin{eqnarray*}
H_{\frac{p}{q}} &=& \frac{q}{p}-\sum_{j=1}^{q-1}\mathrm{Re}\left(\left(1-e^{-i\frac{2jp\pi}{q}}\right)\left(\ln\left(2\sin\frac{j\pi}{q}\right)+i\left(\frac{j\pi}{q}-\frac{\pi}{2}\right)\right)\right)\\
&=& \frac{q}{p}-\sum_{j=1}^{q-1}\left(\left(1-\cos\frac{2jp\pi}{q}\right)\ln\left(2\sin\frac{j\pi}{q}\right)
-\left(\frac{j\pi}{q}-\frac{\pi}{2}\right)\sin\frac{2jp\pi}{q}\right).
\end{eqnarray*}

Now, it is not difficult to show that
$$
\sum_{j=1}^{q-1}\sin\frac{2jp\pi }{q}~=~0,\qquad\sum_{j=1}^{q-1}\cos\frac{2jp\pi }{q}~=~-1,
$$
and
$$
\sum_{j=1}^{q-1}j{\,}\sin\frac{2jp\pi }{q}~=~ -\frac{q}{2}{\,}\cot\frac{p\pi}{q}{\,}.
$$
Thus, we obtain
\begin{eqnarray*}
H_{\frac{p}{q}} &=& \frac{q}{p}-q\ln 2-\frac{\pi}{2}\cot\frac{p\pi}{q} -\ln\bigg(\prod_{j=1}^{q-1}\sin\frac{j\pi}{q}\bigg)+\sum_{j=1}^{q-1}\cos\frac{2jp\pi}{q}\,\ln\bigg(\sin\frac{j\pi}{q}\bigg),
\end{eqnarray*}
where the product of sines, which can be evaluated by means of Euler's reflection formula and then the multiplication theorem for the gamma function, is exactly $q{\,}2^{1-q}$. Finally, the expression for $H_{\frac{p}{q}}$ above reduces to
$$
H_{\frac{p}{q}} ~=~ \frac{q}{p}-\ln (2q)-\frac{\pi}{2}\cot\frac{p\pi}{q} +\sum_{j=1}^{q-1}\cos\frac{2jp\pi }{q}\,\ln\left(\sin\frac{j\pi}{q}\right).
$$
To conclude the proof, we simply observe that both $\cos\frac{2jp\pi }{q}$ and $\sin\frac{j\pi}{q}$ remain invariant when $j$ is replaced with $q-j$.
\end{example}

\section*{Acknowledgments}

This research is supported by the internal research project R-AGR-0500 of the University of Luxembourg.


\end{document}